\begin{document}

\numberwithin{equation}{section}

\newtheorem{theorem}{Theorem}[section]
\newtheorem{corollary}{Corollary}[section]
\newtheorem{remark}{\textit{Remark}}[section]
\newtheorem{lemma}{\textit{Lemma}}[section]
\newtheorem{cor}{\textit{Corollary}}[section]
\newtheorem{example}{\textit{Example}}[section]
\newtheorem{conjecture}{\textit{Conjecture}}[section]
\newtheorem{proposition}{\textit{Proposition}}[section]

\def\b{\boldsymbol}
\def\note#1{ {\sffamily \textcolor{blue}{#1}} }
\def\Reg{\text{Reg}}

\title{Lanczos-like algorithm for the time-ordered exponential: The $\ast$-inverse problem.}

\author{\|Pierre-Louis |Giscard|, Universit\'e du Littoral C\^{o}te d’Opale, EA2597-LMPA-Laboratoire de Math\'ematiques Pures et Appliqu\'ees Joseph Liouville, Calais, France. Email: \href{mailto:giscard@univ-littoral.fr}{giscard@univ-littoral.fr},\\
        \|Stefano |Pozza|, Faculty of Mathematics and Physics, Charles University, Sokolovsk\'a 83, 186 75 Praha 8, Czech Republic. Email: \href{mailto:pozza@karlin.mff.cuni.cz}{pozza@karlin.mff.cuni.cz}}
  
  \rec{\today}      

\abstract
The time-ordered exponential of a time-dependent matrix $\mathsf{A}(t)$ is defined as the function of $\mathsf{A}(t)$ that solves the first-order system of coupled linear differential equations with non-constant coefficients encoded in $\mathsf{A}(t)$. The authors recently proposed the first Lanczos-like algorithm capable of evaluating this function. This algorithm relies on inverses of time-dependent functions with respect to a non-commutative convolution-like product, denoted $\ast$. Yet, the existence of such inverses, crucial to avoid algorithmic breakdowns, still needed to be proved. Here we constructively prove that $\ast$-inverses exist for all non-identically null, smooth, separable functions of two variables. As a corollary, we partially solve the Green's function inverse problem which, given a distribution $G$, asks for the differential operator whose fundamental solution is $G$.
Our results are abundantly illustrated by examples.
\endabstract
%
%
\keywords
Time-ordering, matrix differential equations, time-ordered exponential, Lanczos algorithm, fundamental solution
\endkeywords

\subjclass
35A24, 47B36, 65F10, 65D15
\endsubjclass

\thanks
We thank Francesca Arrigo, Des Higham, Jennifer Pestana, and Francesco Tudisco for their invitation to the  University of Strathclyde, without which this work would not have come to fruition. This work has been supported by Charles University Research program No. UNCE/SCI/023 and the 2019 \textsc{Alcohol} project ANR-19-CE40-0006.
\endthanks

\section{Introduction: Time-ordered Exponential and $\ast$-Lanczos Algorithm}\label{AlgoSection}
\subsection{Context}
Consider the $N\times N$ matrix $\mathsf{A}(t')$ depending on the real-time variable $t'\in I\subseteq \mathbb{R}$.
The time-ordered exponential of $\mathsf{A}(t')$ is defined as the unique solution $\mathsf{U}(t',t)$ of the system of coupled linear differential equations with non-constant coefficients 
\begin{equation}\label{FundamentalSystem}
\mathsf{A}(t') \mathsf{U}(t',t)=\frac{d}{dt'}\mathsf{U}(t',t), \quad \mathsf{U}(t,t)=\mathsf{Id},\text{ for all }t\in I,
\end{equation}
with $t\leq t' \in I$ and $\mathsf{Id}$ the identity matrix. Under the assumption that $\mathsf{A}$ commutes with itself at all times, i.e., $\mathsf{A}(\tau_1)\mathsf{A}(\tau_2)-\mathsf{A}(\tau_2)\mathsf{A}(\tau_1)=\b{0}$ for all $\tau_1,\tau_2 \in I$, then the time-ordered exponential is an ordinary matrix exponential $\mathsf{U}(t',t)=\exp\left(\int_t^{t'} \mathsf{A}(\tau)\, \text{d}\tau\right)$.
In general, however, $\mathsf{U}$ has no known explicit form in terms of $\mathsf{A}$.
In spite of its widespread applications throughout physics, mathematics, and engineering, the time-ordered exponential function is still very challenging to calculate. 
Recently P.-L. G. and S. P. proposed the first Lanczos-like algorithm \cite{GiscardPozza2019} capable of evaluating  $\b{w}^{H}\mathsf{U}(t',t)\b{v}$ for any two vectors $\b{w}, \b{v}$ with $\b{w}^{H}\b{v}=1$, where $\b{w}^{H}$ is the Hermitian transpose of $\b{w}$. The algorithm inherently relies on a non-commutative convolution-like product, denoted by $\ast$, between time-dependent functions and necessitates the calculation of inverses with respect to this product. The purpose of the present contribution is to constructively establish the existence of these inverses.
More generally, these results answer the Green's function inverse problem: namely, given a function $G$ of two variables, what is the differential operator whose fundamental solution is $G$? Here, our results are valid even when the function $G$ is a smooth and separable function of two variables $G(t',t)$ rather than depending solely on $t'-t$; a simpler case for which the $\ast$-product reduces to a convolution and the solution is obtained from standard Fourier analysis.

Before these results can be presented, we recall the definition and properties of the product utilized. 

\subsection{$\ast$-Product}\label{ProdDef}
Let $t$ and $t'$ be time variables in an interval $I\subseteq \mathbb{R}$. Let $f_1(t',t)$ and $f_2(t',t)$ be time-dependent generalized functions. We define the convolution-like $\ast$ product between $f_1(t',t)$ and $f_2(t',t)$ as
\begin{equation}\label{eq:def:*}
  \big(f_2 * f_1\big)(t',t) := \int_{-\infty}^{\infty} f_2(t',\tau) f_1(\tau, t) \, \text{d}\tau.
\end{equation}
From this definition, we find the identity element with respect to the $\ast$-product to be the Dirac delta distribution, $1_\ast:=\delta(t'-t)$. Observe that the $*$-product is not, in general, a convolution but may be so when both $f_1(t',t)$ and $f_2(t',t)$ depend only on the difference $t'-t$.

As a case of special interest for the $\ast$-Lanczos algorithm, consider the situation where $f_1(t',t):=\tilde{f}_1(t',t)\Theta(t'-t)$ and $f_2(t',t):=\tilde{f}_2(t',t)\Theta(t'-t)$, where $\Theta(\cdot)$ stands for the Heaviside theta function (with the convention $\Theta(0)=1$).
Here and in the rest of the paper, the tilde indicates that  $\tilde{f}$ is an ordinary function. Then the $\ast$-product between $f_1,f_2$ simplifies to
\begin{align*}
  \big(f_2 * f_1\big)(t',t) &= \int_{-\infty}^{\infty} \tilde{f}_2(t',\tau) \tilde{f}_1(\tau, t)\Theta(t'-\tau)\Theta(\tau-t) \, \text{d}\tau,\\ &=\Theta(t'-t)\int_t^{t'} \tilde{f}_2(t',\tau) \tilde{f}_1(\tau, t) \, \text{d}\tau,
\end{align*}
 which makes calculations involving such functions easier to carry out. 

The $\ast$-product extends directly to time-dependent matrices by using the ordinary matrix product between the integrands in \eqref{eq:def:*}
 (see \cite{GiscardPozza2019} for more details). It is also well defined for functions that depend on less than two-time variables. Indeed, consider a generalized function $f_3(t')$, then
\begin{align*}
\big(f_3 \ast f_1\big)(t',t)&= f_3(t')\int_{-\infty}^{+\infty}  f_1(\tau, t) \, \text{d}\tau,\\
\big(f_1\ast f_3\big)(t',t)&=\int_{-\infty}^{+\infty}  f_1(t',\tau)f_3(\tau) \, \text{d}\tau.
\end{align*}
where $f_1(t',t)$ is defined as before. Hence the time variable of $f_3(t')$ is treated as the left time variable of a doubly time-dependent generalized function. This observation extends straightforwardly to constant functions.

\subsection{$\ast$-Lanczos algorithm}
As shown in \cite{Giscard2015}, if $\tilde{\mathsf{A}}(t')$ is a time-dependent matrix with bounded entries for every $t' \in I$, then the related time-ordered exponential $\mathsf{U}(t',t)$ can be expressed as
\begin{equation}\label{OrderedExp}
\mathsf{U}(t',t)=\Theta(t'-t)\int_{t}^{t'} \mathsf{R}_{\ast}(\tilde{\mathsf{A}})(\tau,t)\,\text{d}\tau.
\end{equation}
Here $\mathsf{R}_{\ast}$ is the $\ast$-resolvent, defined as
\begin{align*}
\mathsf{R}_{\ast}(\tilde{\mathsf{A}})&:=\big(\mathsf{Id}1_\ast-\tilde{\mathsf{A}}\big)^{\ast-1},\\
&\textcolor{white}{:}=\mathsf{Id}\,1_\ast+\sum_{k> 0}\tilde{\mathsf{A}}^{\ast k}.
\end{align*}

\begin{table}
     \noindent\fbox{
\parbox{0.95\textwidth}{\begin{center}
\parbox{0.9\textwidth}{
\bigskip

 \noindent \underline{Input:} A complex time-dependent matrix $\mathsf{A}$, and
                  complex vectors $\b{v},\b{w}$ such that $\b{w}^H\b{v} = 1$.

 \noindent \underline{Output:} Coefficients $\alpha_0,\cdots,\, \alpha_{n-1}$ and $\beta_0,\cdots,\, \beta_{n-1}$ defining the matrix $\mathsf{T}_n$ of Eq.~(\ref{eq:tridiag}) which satisfies Eq.~(\ref{AjTjResult}).
  \begin{align*}
  & \textrm{Initialize: } \b{v}_{-1}=\b{w}_{-1}=0, \, \b{v}_0 = \b{v} \, 1_*, \, \b{w}_0^H = \b{w}^H 1_*. \\
  &    \alpha_0 = \b{w}^H \mathsf{A} \, \b{v}, \\
  &    \b{w}_1^H = \b{w}^H \mathsf{A} -  \alpha_{0}\, \b{w}^H,\\
  &    \b{\widehat v}_{1} = \mathsf{A}\, \b{v} - \b{v}\,\alpha_{0}, \\
  &    \beta_1 = \b{w}^H \mathsf{A}^{*2} \, \b{v} - \alpha_0^{*2}, \\
  &    \qquad\textrm{If } \beta_1 \textrm{ is not $*$-invertible, then stop, otherwise}, \\
  &     \b{v}_1 = \b{\widehat v}_1 * \beta_1^{*-1}, \\[1em]
  & \textrm{For } n=2,\dots \\ 
  &    \qquad \quad \alpha_{n-1} = \b{w}_{n-1}^H * \mathsf{A} * \b{v}_{n-1}, \\
  &    \qquad \quad \b{w}_{n}^H = \b{w}_{n-1}^H * \mathsf{A} -  \alpha_{n-1} * \b{w}_{n-1}^H - \beta_{n-1}*\b{w}_{n-2}^H, \\
  &    \qquad \quad \b{\widehat v}_{n} = \mathsf{A} * \b{v}_{n-1} - \b{v}_{n-1}*\alpha_{n-1} - \b{v}_{n-2}, \\
  &     \qquad \quad \beta_n = \b{w}_{n}^H * \mathsf{A} * \b{v}_{n-1}, \\
  &     \qquad \qquad\quad \textrm{If } \beta_n \textrm{ is not $*$-invertible, then stop, otherwise}, \\
  &     \qquad \quad  \b{v}_n = \b{\widehat v}_n * \beta_n^{*-1}, \\
  & \textrm{end}.
  \end{align*}
  }\end{center}}}
  \caption{The $*$-Lanczos Algorithm of \cite{GiscardPozza2019}.}\label{algo:*lan}
  \end{table} 
  
Now we can recall the results \cite{GiscardPozza2019} pertaining to the time-ordered-exponential. Let $\mathsf{A}(t',t):=\tilde{\mathsf{A}}(t')\Theta(t'-t)$ with $\tilde{\mathsf{A}}(t')$ a $N\times N$ time-dependent matrix. The $\ast$-Lanczos algorithm of Table \ref{algo:*lan} produces a sequence of tridiagonal matrices $\mathsf{T}_n$, $1\leq n\leq N$, of the form
\begin{equation}\label{eq:tridiag}
    \mathsf{T}_n := \begin{bmatrix} 
            \alpha_0 & 1_* &          & \\ 
            \beta_1  & \alpha_1 & \ddots & \\ 
                     & \ddots   & \ddots & 1_* \\
                     &           & \beta_{n-1} & \alpha_{n-1}
          \end{bmatrix},
\end{equation}
and such that the matching moment property is achieved:
\begin{theorem}[\cite{GiscardPozza2019}]\label{thm:mmp}
   Let $\mathsf{A},\b{w},\b{v}$ and $\mathsf{T}_n$ be as described above, then
   \begin{equation}\label{AjTjResult}
        \b{w}^H (\mathsf{A}^{*j})\, \b{v} = \b{e}_1^H (\mathsf{T}_n^{*j})\,\b{e}_1, \quad \text{ for } \quad j=0,\dots, 2n-1.
   \end{equation}
\end{theorem}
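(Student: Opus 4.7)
The plan is to adapt the classical moment-matching argument for the non-symmetric Lanczos algorithm to the non-commutative $\ast$-algebra. The central object is the two-sided $\ast$-biorthogonality relation
\[
  \b{w}_i^H \ast \b{v}_j = \delta_{ij}\,1_\ast, \qquad 0 \le i,j \le n-1,
\]
which I would prove by strong induction on $k=\max(i,j)$. The base case follows from $\b{w}^H\b{v}=1$ together with $1_\ast \ast 1_\ast = 1_\ast$. For the inductive step, one substitutes the three-term recurrences defining $\b{w}_k^H$ and $\hat{\b{v}}_k$ into $\b{w}_k^H \ast \b{v}_\ell$ and $\b{w}_\ell^H \ast \b{v}_k$ for $\ell<k$, reduces using the induction hypothesis, and verifies that the definitions of $\alpha_{k-1}$, $\beta_k$ and the normalization $\b{v}_k = \hat{\b{v}}_k \ast \beta_k^{\ast-1}$ were engineered so that every off-diagonal contribution cancels. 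The diagonal term then collapses to $\beta_k \ast \beta_k^{\ast-1} = 1_\ast$.

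Once biorthogonality is in hand, I would recast the recurrences as
\[
  \mathsf{A}\ast\b{v}_{k-1} = \b{v}_{k-1}\ast\alpha_{k-1}+\b{v}_{k-2}+\b{v}_k\ast\beta_k,
  \quad
  \b{w}_{k-1}^H\ast\mathsf{A} = \alpha_{k-1}\ast\b{w}_{k-1}^H+\beta_{k-1}\ast\b{w}_{k-2}^H+\b{w}_k^H,
\]
and iterate them to obtain, for each $k\le n-1$, expansions
\[
  \mathsf{A}^{\ast k}\ast\b{v} = \sum_{i=0}^{k}\b{v}_i\ast c_{ik},
  \qquad
  \b{w}^H\ast\mathsf{A}^{\ast k} = \sum_{i=0}^{k}d_{ik}\ast\b{w}_i^H,
\]
whose coefficients $c_{ik},d_{ik}$ are $\ast$-polynomial expressions involving only $\alpha_0,\dots,\alpha_{k-1}$ and $\beta_1,\dots,\beta_k$. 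Running the $\ast$-Lanczos algorithm of Table~\ref{algo:*lan} on the triple $(\mathsf{T}_n,\b{e}_1,\b{e}_1)$ reproduces exactly the same scalars $\alpha_i,\beta_i$ for $i\le n-1$, since the tridiagonal band structure of $\mathsf{T}_n$ forces its Krylov iteration to obey the same three-term recurrences; hence identical expansions hold with $\mathsf{A}\to\mathsf{T}_n$, $\b{v}\to\b{e}_1$, and $\b{w}\to\b{e}_1$.

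To conclude, for any $j\in\{0,\dots,2n-1\}$, one splits $j=p+q$ with $p,q\le n-1$ and computes
\[
  \b{w}^H\ast\mathsf{A}^{\ast j}\ast\b{v}
   = \sum_{i,\ell=0}^{n-1} d_{ip}\ast(\b{w}_i^H\ast\b{v}_\ell)\ast c_{\ell q}
   = \sum_{i=0}^{n-1} d_{ip}\ast c_{iq},
\]
using biorthogonality at the second equality. The identical computation applied on the tridiagonal side yields $\b{e}_1^H\ast\mathsf{T}_n^{\ast j}\ast\b{e}_1=\sum_i d_{ip}\ast c_{iq}$, establishing \eqref{AjTjResult}.

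I expect the main obstacle to be the careful bookkeeping of non-commutativity throughout the inductions: scalar factors such as $\alpha_k,\beta_k,c_{ik},d_{ik}$ must be kept consistently on the correct side of every vector factor, and the $\ast$-inverse $\beta_k^{\ast-1}$ must be shown to exist at each step (so that the algorithm reaches step $n$ without breakdown) and to interact cleanly with the recurrences defining $\b{v}_k$. Once this algebraic bookkeeping is set up, the argument proceeds almost line by line as in the classical Lanczos moment-matching proof.
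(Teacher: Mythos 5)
A preliminary remark: the paper you were given does not actually prove Theorem~\ref{thm:mmp}; it is recalled from \cite{GiscardPozza2019}, and your strategy --- $\ast$-biorthogonality $\b{w}_i^H\ast\b{v}_j=\delta_{ij}1_\ast$ established inductively from the three-term recurrences, followed by Krylov-type expansions and contraction of the moments --- is precisely the adaptation of the classical non-symmetric Lanczos moment-matching argument that underlies the cited proof. So the route is the right one, but as written it has one genuine gap.

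The gap is at the top moment. Your final step splits $j=p+q$ with $p,q\le n-1$, which only reaches $j\le 2n-2$; the case $j=2n-1$ of \eqref{AjTjResult} is not covered, and it cannot be covered by any such split. The standard repair is to take $p=n-1$, $q=n$ and extend the expansion one step past the computed basis using the \emph{unnormalized} residual: $\mathsf{A}^{\ast n}\ast\b{v}=\sum_{i=0}^{n-1}\b{v}_i\ast c_{in}+\b{\widehat v}_n\ast c_{nn}$ (using $\b{\widehat v}_n$ rather than $\b{v}_n$ matters, because building $\mathsf{T}_n$ only requires $\beta_1,\dots,\beta_{n-1}$ to be $\ast$-invertible, so $\beta_n^{\ast-1}$ may not be available). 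One then checks from the recurrences and biorthogonality that $\b{w}_i^H\ast\b{\widehat v}_n=0$ for all $i\le n-1$ (e.g. $\b{w}_{n-1}^H\ast\b{\widehat v}_n=\alpha_{n-1}-\alpha_{n-1}=0$ and $\b{w}_i^H\ast\mathsf{A}\ast\b{v}_{n-1}=\delta_{i,n-2}$ for $i\le n-2$), so the extra term is annihilated; the analogous residual on the $(\mathsf{T}_n,\b{e}_1,\b{e}_1)$ side must be treated the same way. Relatedly, your assertion that running the $\ast$-Lanczos iteration on $(\mathsf{T}_n,\b{e}_1,\b{e}_1)$ ``reproduces exactly the same scalars'' is itself a lemma, not an observation: it needs a short induction using the band structure of $\mathsf{T}_n$ and the placement of the $1_\ast$'s above and the $\beta_i$'s below the diagonal, which in the non-commutative setting is exactly what makes the left/right coefficient bookkeeping come out identical. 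With that lemma made explicit and the $j=2n-1$ case handled via the residual, your argument is complete and matches the intended proof.
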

In particular, for $n=N$, we have the exact expression
$$
\b{w}^H\mathsf{U}(t',t)\b{v}=\Theta(t'-t)\int_{t}^{t'} \mathsf{R}_{\ast}(\mathsf{T}_{n})_{1,1}(\tau,t)\,\text{d}\tau,
$$
while for $n<N$, the right-hand side yields an approximation to the time-ordered exponential. 
The method of path-sum \cite{Giscard2015} then gives explicitly
\begin{equation}\label{PSresult}
\mathsf{R}_{\ast}(\mathsf{T}_n)_{1,1}(t',t) 
= \Big(1_\ast - \alpha_0-\big(1_\ast-\alpha_1-(1_\ast-...)^{\ast-1}\ast\beta_2\big)^{\ast-1}\ast\beta_1\Big)^{\ast-1}.
\end{equation}
The $\alpha_j$ and $\beta_j$ appearing in the $\mathsf{T}_n$ matrices are produced by the $\ast$-Lanczos procedure through recurrence relations. A crucial step in the algorithm is the $\ast$-inversion of the $\beta_j$, i.e, the calculation of a distribution $\beta^{\ast-1}_j$ such that $\beta^{\ast-1}_j\ast \beta_j = \beta_j \ast \beta^{\ast-1}_j=1_\ast$. The paper \cite{GiscardPozza2019} assumed the existence of such $\ast$-inverses.  However, if a  $\beta^{\ast-1}_j$ fails to exist, then the algorithm suffers a breakdown. 

Under the assumption that all entries of the input matrix $\mathsf{A}(t')$ are smooth functions, we conjectured in \cite{GiscardPozza2019} that all the coefficients $\alpha_{n-1}$ and $\beta_n$ in $\ast$-Lanczos algorithm are of the form  $\alpha_{n-1}=\tilde{\alpha}_{n-1}\Theta(t'-t)$, $\beta_{n}=\tilde{\beta}_{n}\Theta(t'-t)$, with $\tilde{\alpha}_{n-1}$ and $\tilde{\beta}_{n}$ separable functions (see definition in \S\ref{InversesSection}) and smooth in both time variables.
 This conjecture is justified not only by our experiments but also by observing that the set of the separable functions smooth in both $t'$ and $t$ is closed under $\ast$-product, summation, and differentiation. In spite of these encouraging observations, proving the conjecture is surprisingly difficult as nothing a priori precludes the $\alpha_{n-1}$ and $\beta_n$ coefficients produced by the $\ast$-Lanczos algorithm from being arbitrary distributions. Nonetheless, under the conjecture and its assumptions, we
 prove here \emph{in a constructive way} that the algorithmic breakdowns due to $\beta^{\ast-1}_j$ failing to exist cannot happen unless $\beta_j$ is identically null. More generally, we show that the $\ast$-inverse $f^{\ast-1}$ of a function $f(t',t) = \tilde{f}(t',t) \Theta(t'-t)$ can be obtained when $\tilde{f}$ is smooth, not identically null, and separable. Note that here and later, the existence of a $\ast$-inverse means that it exists almost everywhere in $I\times I$.\\[-.7em] 
 
The rest of this article is organized as follows: in \S\ref{InversesSection}, we begin by recalling necessary definitions and properties of separable functions and distributions. In \S\ref{InvOneVar}, we give the $\ast$-inverses of functions of a single variable. We then proceed in \S\ref{InvPoly} with the $\ast$-inverses of all functions that are polynomials in at least one variable. Encouraged by the method underlying these results, we generalize it to construct the $\ast$-inverse of any piecewise smooth separable function in \S\ref{InvSeparable}. Finally, in \S\ref{GreenProb}, we present the relation between our results and the Green's function inverse problem.

\section{Existence and mathematical expression of $\ast$-inverses}\label{InversesSection}
The calculation of $\ast$-inverses of functions $f(t',t)$ carries the gist of the difficulty inherent in obtaining explicit expressions for time-ordered exponentials.
In general, given an arbitrary ordinary function $\tilde{f}(t',t)$ and barring any further assumption, the $\ast$-inverse of $f(t',t)= \tilde{f}(t',t)\Theta(t'-t)$ cannot be given explicitly.\footnote{Practical numerical questions pertaining to the behavior of $\ast$-inverses under time discretization will be discussed in detail elsewhere. As observed in \cite{GiscardPozza2019}, a time-discretized $\ast$-inverse is always computable using an ordinary matrix inverse.} In this section, we show that the $\ast$-inverse $f^{\ast-1}$ is indeed accessible from the solution of an ordinary linear differential equation provided that $\tilde{f}(t',t)$ is a separable function that is smooth in $t,t'$ and not identically null. 
A function $\tilde{f}(t',t)$ is separable if and only if there exist ordinary functions $\tilde{a}_i$ and $\tilde{b}_i$ with
$$
    \tilde{f}(t',t)=\sum_{i=1}^{k}\tilde{a}_i(t')\tilde{b}_i(t).
$$

We begin by recalling important properties of the Dirac delta distribution and its derivatives $\delta^{(j)}$. The Dirac delta derivatives are characterized by the relation expounded by Schwartz \cite{schwartz1978}, $\int_{-\infty}^{\infty}\delta^{(j)}(q)f(q)\,\text{d}q=(-1)^j f^{(j)}(0)$. From this we get that $\ast$-multiplication by $\delta^{(j)}$ acts as a derivative operator
\begin{align*}
    \big(\delta^{(j)}\ast f\big)(t',t)  &= \int_{-\infty}^{\infty} \delta^{(j)}(t'-\tau)\,f(\tau,t)\,\text{d}\tau,\\
    &=-\int_{\infty}^{-\infty}\delta^{(j)}(q)\,f(t'-q,t)\,\text{d}q, \\
    &=(-1)^j  \frac{\partial^j}{\partial q^j}f(t'-q,t)\big|_{q=0},\\
    &=f^{(j,0)}(t',t),
\end{align*}
while we have $f\ast \delta^{(j)} = (-1)^j f^{(0,j)}$. The notation $f^{(j,k)}(\tau,\rho)$ stands for the $j$th $t'$-derivative and $k$th $t$-derivative  of  $f(t',t)$ evaluated at $t'=\tau, t=\rho$. From now on, we omit the $t'-t$ arguments of the Heaviside $\Theta$ functions and Dirac deltas when necessary to alleviate the equations.\\[-.7em]

For functions of the form $f(t',t)=\tilde{f}(t',t)\Theta(t'-t)$, the derivatives resulting from the $\ast$-action of $\delta^{(j)}$ are taken in the sense of distributions:\begin{subequations}\label{eq:deltader} \begin{align}
       \delta^{(j)} * f(t',t) &= \tilde{f}^{(j,0)}(t',t) \Theta + \tilde{f}^{(j-1,0)}(t,t) \delta +
       \dots +
       \tilde{f}(t,t) \delta^{(j-1)},\\
      f(t',t)\ast  \delta^{(j)}  &=(-1)^j\left( \tilde{f}^{(0,j)}(t',t) \Theta + \tilde{f}^{(0,j-1)}(t',t') \delta +
       \dots +
       \tilde{f}(t',t') \delta^{(j-1)}\right) ;
   \end{align}
\end{subequations}
see \cite[Chapter 2, \S~2]{schwartz1978}.
Finally, we note the following identities between distributions for $j\geq 0$
\begin{subequations}\label{RelDeltas}
\begin{align}
\tilde{f}(t')\delta^{(j)}(t'-t)&=(-1)^j \big(\tilde{f}(t)\delta(t'-t)\big)^{(0,j)},\\
\tilde{f}(t)\delta^{(j)}(t'-t)&= \big(\tilde{f}(t')\delta(t'-t)\big)^{(j,0)},
\end{align}
\end{subequations}
where $\tilde{f}$ is an ordinary function.

\subsection{Functions of a single time variable}\label{InvOneVar}
The $\ast$-inverse of functions of a single time variable times a Heaviside function are easy to find explicitly: 
\begin{proposition}\label{OneTimeInverses}
Let $a(t',t):=\tilde{a}(t')\Theta(t'-t)$ and $b(t',t):=\tilde{b}(t)\Theta(t'-t)$ so that $\tilde{a}$ and $\tilde{b}$ are differentiable, and not identically null over $I$. Then
\begin{equation*}
    a^{\ast-1}(t',t) =\frac{\partial}{\partial t'}\frac{\delta(t'-t)}{\tilde{a}(t')}, \quad b^{\ast-1}(t',t) = - \frac{\delta'(t'-t)}{\tilde{b}(t)}.
\end{equation*}
\end{proposition}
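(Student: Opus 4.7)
The plan is to verify both formulae by direct substitution into the integral definition~\eqref{eq:def:*}, checking that each candidate acts as a two-sided $\ast$-inverse, i.e., $a\ast a^{\ast-1}=a^{\ast-1}\ast a=1_\ast$ and $b\ast b^{\ast-1}=b^{\ast-1}\ast b=1_\ast$. Non-commutativity of $\ast$ forces the two orderings to be treated separately. Each of the four identities reduces to a single $\tau$-integration whose integrand is a product of Heavisides with a $\delta$ or a $\delta'$; these can be evaluated either by the sifting property or by an integration by parts that turns $\delta'$ into $\delta$.

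For $a^{\ast-1}$, I would substitute $a^{\ast-1}(\tau,t)=\partial_\tau\bigl[\delta(\tau-t)/\tilde{a}(\tau)\bigr]$ into $(a\ast a^{\ast-1})(t',t)$. The factor $\tilde{a}(t')$, being independent of $\tau$, comes outside the integral; integration by parts in $\tau$ transfers the derivative onto $\Theta(t'-\tau)$, producing $-\delta(t'-\tau)$. The resulting pair $\delta(t'-\tau)\,\delta(\tau-t)$ sifts in $\tau$ and the ratio $\tilde{a}(t')/\tilde{a}(t')$ collapses to $1$, yielding $\delta(t'-t)$. The reverse order $a^{\ast-1}\ast a$ is even more immediate: $\partial_{t'}$ can be pulled outside the $\tau$-integral and acts on $\bigl(\tilde{a}(t')/\tilde{a}(t')\bigr)\Theta(t'-t)=\Theta(t'-t)$, producing $\delta(t'-t)$. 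Boundary contributions in the integration by parts vanish because $\Theta$ and $\delta$ have joint support compact in $\tau$.

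For $b^{\ast-1}$, the structural difference is that $\tilde{b}$ depends on the right time variable instead of the left. To put the computation in a form parallel to the $a$-case I would first invoke the second identity of~\eqref{RelDeltas} with $j=1$ and $\tilde{f}=1/\tilde{b}$ to rewrite $\delta'(t'-t)/\tilde{b}(t)=\partial_{t'}\bigl[\delta(t'-t)/\tilde{b}(t')\bigr]$, thus recasting $b^{\ast-1}$ as (minus) a $t'$-derivative of a $\delta$ normalised by $\tilde{b}$ evaluated on the left variable. Because $\tilde{b}(\tau)$ in $b(t',\tau)=\tilde{b}(\tau)\Theta(t'-\tau)$ does \emph{not} factor out of the $\tau$-integral (unlike $\tilde{a}(t')$ in the previous step), the integration by parts generates extra terms from the product rule on $\tilde{b}(\tau)\Theta(t'-\tau)$; tracking these carefully and using \eqref{RelDeltas} once more to collapse diagonal-supported contributions is what produces the clean $\delta(t'-t)$ in both orderings.

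The main subtlety I anticipate lies in making the distributional algebra rigorous on the (possibly non-empty) zero set of $\tilde{a}$ or $\tilde{b}$. Because the hypotheses require only these functions to be not identically null (not nowhere-zero), $1/\tilde{a}$ and $1/\tilde{b}$ can fail to exist at isolated points, which is exactly why the proposition asserts existence of the $\ast$-inverse only in an almost-everywhere sense on $I\times I$. Away from the zero set the distributional manipulations \eqref{eq:deltader}--\eqref{RelDeltas} are classical, and the verification above applies term by term.
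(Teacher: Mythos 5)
Your strategy --- verifying the displayed formulae by direct substitution into the definition of $\ast$ and checking both orderings --- is a legitimate alternative to the paper's argument, which is constructive rather than verificational: the paper computes $a\ast\delta'=\tilde a(t')\delta(t'-t)$, inverts the simple distribution $\tilde a(t')\delta(t'-t)$, and reads off $a^{\ast-1}=\delta'\ast\bigl(\delta(t'-t)/\tilde a(t')\bigr)$, with ``an analogous proof'' for $b$. Your treatment of the $a$-half is correct: with $a^{\ast-1}=\partial_{t'}\bigl[\delta(t'-t)/\tilde a(t')\bigr]$ both orderings collapse, after sifting, to $\partial_{t'}\Theta(t'-t)=\delta(t'-t)$, the diagonal support of $\delta$ disposing of the $\tilde a(t)$ versus $\tilde a(t')$ ambiguity.

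The $b$-half, however, does not close as you describe, and this is a genuine gap. With the standard reading of the symbols (prime $=$ derivative in the argument, the factor $1/\tilde b(t)$ outside any derivative), your candidate $-\delta'(t'-t)/\tilde b(t)$ --- which your appeal to Eq.~(\ref{RelDeltas}) merely rewrites as $-\partial_{t'}\bigl[\delta(t'-t)/\tilde b(t')\bigr]$, an identity that changes nothing --- is \emph{not} a $\ast$-inverse of $b$. Carrying out exactly the integration by parts you sketch gives
\begin{align*}
\Bigl(b\ast\bigl(-\tfrac{\delta'(t'-t)}{\tilde b(t)}\bigr)\Bigr)(t',t)&=\frac{\tilde b'(t)}{\tilde b(t)}\,\Theta(t'-t)-\delta(t'-t),\\
\Bigl(\bigl(-\tfrac{\delta'(t'-t)}{\tilde b(t)}\bigr)\ast b\Bigr)(t',t)&=\frac{\tilde b(t)\,\tilde b'(t')}{\tilde b(t')^{2}}\,\Theta(t'-t)-\delta(t'-t).
\end{align*}
The extra product-rule terms you propose to ``collapse'' via Eq.~(\ref{RelDeltas}) are supported on the whole region $t'\geq t$, not on the diagonal, so no such collapse is available; moreover even the $\delta$-term has the wrong sign (take $\tilde b\equiv 1$: your candidate returns $-\delta'$, whereas the paper's first example gives $\Theta^{\ast-1}=+\delta'$). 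The object that does verify --- and what the paper's ``analogous proof'' actually constructs --- is the exact mirror of the $a$-formula, with the derivative acting on the whole quotient in $t$:
\begin{equation*}
b^{\ast-1}=\frac{\delta(t'-t)}{\tilde b(t)}\ast\delta'=-\frac{\partial}{\partial t}\,\frac{\delta(t'-t)}{\tilde b(t)}=\frac{\delta'(t'-t)}{\tilde b(t')},
\end{equation*}
which differs from your candidate both in the sign of its $\delta'$-part and by the additional diagonal term $\bigl(\tilde b'(t)/\tilde b(t)^{2}\bigr)\delta(t'-t)$. So the concrete failure is the assertion that careful bookkeeping yields ``the clean $\delta(t'-t)$ in both orderings'': for the candidate as you set it up, the bookkeeping yields the two displays above. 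A correct completion must either verify the quotient-derivative form $-\partial_t\bigl[\delta(t'-t)/\tilde b(t)\bigr]$ (equivalently $\delta'(t'-t)/\tilde b(t')$), or follow the paper's route of $\ast$-composing with $\delta'$ to reduce the problem to inverting $\tilde b(t)\delta(t'-t)$.
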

\begin{proof}
Since $\tilde{a}(t')$ is an ordinary function and $a(t',t)=\tilde{a}(t')\Theta(t'-t)$, Eqs.~(\ref{eq:deltader}) and \cite[Chapter 2, \S~2]{schwartz1978} give
\begin{equation*}
    \big(a\ast \delta' \big)(t',t)  =  \tilde{a}(t')\delta(t'-t),
\end{equation*}
as $\Theta^{(0,1)}(t'-t)=-\delta(t'-t)$.
We deduce that  
$
    a^{\ast -1}(t',t) = \delta'(t'-t) \ast \left(\tilde{a}(t')\delta(t'-t)\right)^{\ast -1}.
$
The $\ast$-inverse of $\tilde{a}(t')\delta(t'-t)$ is the solution $x(t',t)$ of the equation $\tilde{a}(t')\delta(t'-t)\ast x(t',t)=\delta(t'-t)$,
i.e., $x(t',t) = \delta(t'-t)/\tilde{a}(t')$, from which we get the expression  
$$
a^{\ast -1}(t',t)=\delta'(t'-t)\ast \frac{\delta(t'-t)}{\tilde{a}(t')}.
$$
An analogous proof yields the inverse $b^{\ast-1}$.
\end{proof}

 Proposition~\ref{OneTimeInverses} is particularly useful to determine the $\ast$-inverse of products of functions of a single time variable such as those of \cite{GiscardPozza2019}. We give two detailed examples of this below:\\[-.5em]
\begin{example}
Let us determine the $\ast$-inverse of $(t'-t)\Theta$. To this end, we remark that $(t'-t)\Theta = \Theta \ast \Theta$ and thus
$$
\left((t'-t)\Theta\right)^{\ast-1}=\big(\Theta^{\ast-1}\ast \Theta^{\ast-1}\big).
$$
Since $\Theta=1\times \Theta$, the $\ast$-inverse of $\Theta$ is immediately provided by Proposition~\ref{OneTimeInverses} as $\Theta^{\ast-1}=\delta'$.
Then $\Theta^{\ast-1}\ast\Theta^{\ast-1}=\delta''$, whose $\ast$-action on a test function $f(t',t)$ is
\begin{align*}
\big(\Theta^{\ast-1}\ast\Theta^{\ast-1}\ast f\big)(t',t)
&=f^{(2,0)}(t',t).
\end{align*}
\end{example}

\begin{example}
Let us find the left and right actions of the $\ast$-inverse of $\beta(t',t):=2\big(\sin(t')-\sin(t)\big)+(t'-t)$ on test functions. We note that $\beta= b_2\ast b_1$ with $b_2(t')=\Theta(t'-t)$ and $b_1(t')=2(\cos(t')+1)\Theta(t'-t)$. Hence by Proposition~\ref{OneTimeInverses}, the left action of the inverse on a test function $f(t',t)$ is
\begin{align*}
\beta^{\ast-1}\ast f&=b_1^{\ast-1}\ast b_2^{\ast-1}\ast f=\frac{\partial}{\partial t'}\left[\frac{1}{2(\cos(t')+1)}\frac{\partial}{\partial t'}f(t',t)\right],\\
&=\frac{\sin(t')}{2(\cos(t')+1)^2}\frac{\partial}{\partial t'}f(t',t)+\frac{1}{2(\cos(t')+1)}\frac{\partial^2}{\partial t'^2}f(t',t),
\end{align*}
and its right action is
\begin{align*}
f\ast \beta^{\ast-1}&=f\ast b_1^{\ast-1}\ast b_2^{\ast-1}=\frac{\partial}{\partial t}\left[\frac{1}{2(\cos(t)+1)}\frac{\partial}{\partial t}f(t',t)\right],\\
&=\frac{\sin(t)}{2(\cos(t)+1)^2}\frac{\partial}{\partial t}f(t',t)+\frac{1}{2(\cos(t)+1)}\frac{\partial^2}{\partial t^2}f(t',t).
\end{align*}
\end{example}

\subsection{$\ast$-inverses of polynomials}\label{InvPoly}
The method employed in the proof of Proposition~\ref{OneTimeInverses} relying on differential equations generalizes straightforwardly to polynomials in at least one time variable, here taken to be $t'$. An analogous result can be given for functions that are polynomials in $t$.
\begin{proposition}\label{prop:inv:poly}
 Let $p(t',t)=\tilde{p}(t',t)\Theta(t'-t)$ be so that $\tilde{p}(t',t)$ is a polynomial of degree $k\geq 1$ in $t'$ and is smooth in $t$. If $p(t,t)$ is not identically null over $I$, then
 \begin{equation*}
    p(t',t)^{*-1} = x(t',t) * \delta^{(k+1)}(t'-t),    
 \end{equation*}
 where $x(t',t)=\tilde{x}(t',t)\Theta(t'-t)$ is the solution of the linear homogeneous ordinary differential equation in $t$
 \begin{align*}
  \sum_{j=0}^k(-1)^j\tilde{p}^{(k-j,0)}(t,t)\tilde{x}^{(0,j)}(t',t) = 0 ,
 \end{align*}
 with the boundary conditions 
 \begin{equation*}
     \tilde{x}^{(0,k-1)}(t',t') = \frac{(-1)^k}{\tilde{p}(t',t')}, ~ \tilde{x}^{(0,k-2)}(t',t') = 0, ~ \dots, ~ \tilde{x}(t',t') = 0.
 \end{equation*}
 \end{proposition}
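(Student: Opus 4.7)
The plan is to extend Proposition~\ref{OneTimeInverses} by exploiting the polynomial degree of $\tilde{p}$ in $t'$ through left $\ast$-composition with $\delta^{(k+1)}$. Concretely, I postulate the ansatz $p^{\ast-1}=x\ast\delta^{(k+1)}$ with $x=\tilde{x}\,\Theta$ to be determined, and translate the defining equation $p^{\ast-1}\ast p=1_\ast$ into conditions on $\tilde{x}$, first simplifying $\delta^{(k+1)}\ast p$ and then composing with $x$ on the left.

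Using~\eqref{eq:deltader} gives $\delta^{(k+1)}\ast p=\tilde{p}^{(k+1,0)}\Theta+\sum_{j=0}^{k}\tilde{p}^{(k-j,0)}(t,t)\,\delta^{(j)}(t'-t)$. The crucial observation is that $\tilde{p}$ has degree $k$ in $t'$, so $\tilde{p}^{(k+1,0)}\equiv 0$ and the $\Theta$ term disappears, leaving a finite combination of Dirac deltas whose coefficients are smooth functions of $t$ alone. By associativity $p^{\ast-1}\ast p=x\ast(\delta^{(k+1)}\ast p)$, so I next expand every $x\ast\bigl[\tilde{p}^{(k-j,0)}(t,t)\delta^{(j)}(t'-t)\bigr]$ via the distributional Leibniz rule applied to $\partial_{\tau}^{\,j}[\tilde{x}(t',\tau)\Theta(t'-\tau)]$ at $\tau=t$, and reduce the resulting products $\phi(t',t)\delta^{(n)}(t'-t)$ to canonical form using~\eqref{RelDeltas}. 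Each summand thus splits into a regular piece proportional to $\Theta(t'-t)$ and a singular piece supported on the diagonal $\{t=t'\}$.

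Matching against $\delta(t'-t)$ then separates into two families of equations. Vanishing of the aggregate $\Theta$ coefficient reproduces exactly the stated ODE $\sum_{j=0}^{k}(-1)^{j}\tilde{p}^{(k-j,0)}(t,t)\tilde{x}^{(0,j)}(t',t)=0$, a homogeneous linear equation of order $k$ in $t$ (with $t'$ a parameter) whose leading coefficient $(-1)^{k}\tilde{p}(t,t)$ does not vanish identically by hypothesis, hence admitting a unique smooth solution once boundary data on $\{t=t'\}$ are prescribed. The singular contributions yield a triangular algebraic system for the diagonal values $\tilde{x}(t',t'),\ldots,\tilde{x}^{(0,k-1)}(t',t')$: the coefficient of the top derivative $\delta^{(k-1)}(t'-t)$ pins down $\tilde{x}^{(0,k-1)}(t',t')=(-1)^{k}/\tilde{p}(t',t')$, while the coefficients of $\delta^{(k-2)},\ldots,\delta^{(0)}$, after invoking the ODE to eliminate mixed $t'$-derivatives on the diagonal, cascade down into $\tilde{x}^{(0,j)}(t',t')=0$ for $0\le j\le k-2$.

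The hard part will be the distributional bookkeeping in the expansion step: the Leibniz expansion of $\partial_{\tau}^{\,j}[\tilde{x}\,\Theta(t'-\tau)]$ at $\tau=t$ generates many lower-order delta terms, each of which, via~\eqref{RelDeltas}, contributes to several $\delta^{(\ell)}(t'-t)$ with coefficients mixing $t'$- and $t$-derivatives of $\tilde{x}$ on the diagonal, and one must verify that these reassemble, modulo the ODE, into the advertised triangular cascade with the correct signs and binomial factors. Once that is done, a symmetric computation applied to $p\ast(x\ast\delta^{(k+1)})$ shows that $x\ast\delta^{(k+1)}$ is in fact a two-sided $\ast$-inverse.
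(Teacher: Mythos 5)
Your proposal follows essentially the same route as the paper: the ansatz $p^{\ast-1}=x\ast\delta^{(k+1)}$ with $x=\tilde{x}\,\Theta$, the observation that $\tilde{p}^{(k+1,0)}\equiv 0$ reduces $\delta^{(k+1)}\ast p$ to $\sum_{j=0}^{k}\tilde{p}^{(k-j,0)}(t,t)\,\delta^{(j)}$, and the split of $x\ast(\delta^{(k+1)}\ast p)=\delta$ into a $\Theta$-part (the stated ODE) and a triangular diagonal system (the boundary conditions) is exactly the paper's argument. One bookkeeping slip in your sketch: since $x\ast\delta^{(j)}=(-1)^{j}\bigl(\tilde{x}^{(0,j)}\Theta+\tilde{x}^{(0,j-1)}(t',t')\delta+\dots+\tilde{x}(t',t')\delta^{(j-1)}\bigr)$, the coefficient of the \emph{top} singular term $\delta^{(k-1)}$ is $(-1)^{k}\tilde{p}(t,t)\,\tilde{x}(t',t')$, so it is this equation (and the cascade below it) that forces $\tilde{x}^{(0,j)}(t',t')=0$ for $j\le k-2$, while the coefficient of $\delta^{(0)}$, set equal to $1$ and converted via Eq.~(\ref{RelDeltas}), yields $\tilde{x}^{(0,k-1)}(t',t')=(-1)^{k}/\tilde{p}(t',t')$ --- the opposite attribution to the one you describe; also, only $t$-derivatives of $\tilde{x}$ on the diagonal appear, so there are no mixed $t'$-derivatives to eliminate through the ODE. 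Carrying out the expansion as you plan would correct this automatically and lands on the boundary conditions you (and the proposition) state.
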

\begin{proof}
Observe that $p(t',t)$ is a piecewise smooth function, and, as a function of $t'$, it has a discontinuity located at $t'=t$. Since furthermore, $\tilde{p}(t',t)$ is of degree $k$ in $t'$, Eq.~\eqref{eq:deltader} gives
 \begin{align*}
     \big(\delta^{(k+1)}\ast p\big)(t',t) 
     &=\sum_{j=0}^{k}\tilde{p}^{(k-j,0)}(t,t)\delta^{(j)}(t'-t).
 \end{align*}
 Hence $
    p(t',t)^{*-1} = x(t',t) * \delta^{(k+1)}(t'-t),    
$
 where $x(t',t)$ is the generalized function satisfying
 \begin{equation}\label{eq:ode}
     x(t',t) \ast \left (\sum_{j=0}^k \tilde{p}^{(k-j,0)}(t,t)\delta^{(j)}(t'-t) \right) = \delta(t'-t).
 \end{equation}
 Now let us assume that the solution $x(t',t)$ takes the form $x(t',t)=\tilde{x}(t',t)\Theta(t'-t)$ with  $\tilde{x}(t',t)$ a smooth function of $t$. Then we get, for $j=0,\dots, k$, 
 \begin{align*}
     x(t',t) \ast \tilde{p}^{(k-j,0)}(t,t)\delta^{(j)} &=\\
     &\hspace{-15mm}\tilde{p}^{(k-j,0)}(t,t)(-1)^j\left( \tilde{x}^{(0,j)}(t',t)\Theta+\sum_{\ell=0}^{j-1}\tilde{x}^{(0,j-1-\ell)}(t',t')\delta^{(\ell)}\right).
 \end{align*}
Thus Eq.~\eqref{eq:ode} can be rewritten as the system:
\begin{align*}
\left\{
\begin{array}{l}
 \displaystyle \sum_{j=0}^k(-1)^j\tilde{p}^{(k-j,0)}(t,t)\tilde{x}^{(0,j)}(t',t) = 0, \\
 \displaystyle \sum_{j=1}^{k}(-1)^j \tilde{p}^{(k-j,0)}(t,t)\tilde{x}^{(0,j-1)}(t',t') = 1, \\
 \displaystyle \sum_{j=2}^{k}(-1)^j \tilde{p}^{(k-j,0)}(t,t)\tilde{x}^{(0,j-2)}(t',t') =  0, \\
 \displaystyle \quad\vdots \qquad\, \qquad \quad \quad \quad \,\vdots \\
 \displaystyle (-1)^k\tilde{p}(t,t) \tilde{x}(t',t')   = 0. 
   \end{array} \right .
\end{align*}
As $\tilde{p}(t,t)$ is not identically null, the last $k-1$ equations imply
$\tilde{x}^{(0,j)}(t',t') = 0$ for $j=0,\dots,k-2$.
Moreover, since by Eq.~(\ref{RelDeltas}) we have $\tilde{p}(t,t)\delta(t'-t) = \tilde{p}(t',t')\delta(t'-t)$, the second equation becomes $(-1)^k\tilde{p}(t',t')\tilde{x}^{(0,k-1)}(t',t') = 1$. Since the set of zeros of $\tilde{p}(t',t')$ is made of isolated points, the ordinary differential equation above has a solution almost everywhere (more precisely, $\tilde{x}(t',t)$ is defined for $t',t \in I \setminus \{\tau : \tilde{p}(\tau,\tau) = 0\}$). Thus assuming $x(t',t)$ to be of the form $\tilde{x}(t',t)\Theta(t'-t)$ with $\tilde{x}$ smooth in $t$ is a consistent choice, which concludes the proof.
\end{proof}
~\\[-1.5em]

\begin{remark}\label{rem:ptt}
If $\tilde{p}(t,t)$ is identically null over $I$, then
\begin{equation*}
    \delta'(t'-t)\ast p(t',t) = \tilde{p}^{(1,0)}(t',t)\Theta(t'-t),
\end{equation*}
since $p$ is continuous at $t'=t$. Hence we can apply Proposition \ref{prop:inv:poly} to $\tilde{p}^{(1,0)}(t',t)\Theta(t'-t)$. In the further case in which all $\tilde{p}^{(j)}(t,t)=0$ are identically null for $j=0,\dots,k-1$ and $\tilde{p}^{(k)}(t,t)$ is a constant $\alpha$, the $\ast$-inverse is obtained noting that
\begin{equation*}
    \delta^{(k+1)}(t'-t)\ast p(t',t) = \alpha \, \delta(t'-t).
\end{equation*}
These considerations show that the condition $\tilde{p}(t,t)\neq 0$ is not necessary for $p^{\ast-1}(t',t)$ to exist. Rather the  condition is that $p(t',t)$ itself must not be identically zero.\\[-.5em]
\end{remark}

\begin{example}Let us determine the $\ast$-inverse of the polynomial $p(t',t):=\big(t'-2t\big)\Theta(t'-t)$. 
Following Proposition~\ref{prop:inv:poly} we have
$
p(t',t)^{\ast-1}=x(t',t)\ast\delta''
$,
where $x(t',t)=\tilde{x}(t',t)\Theta$ and $\tilde{x}(t',t)$ solves
$$
\tilde{x}(t',t)+t \tilde{x}^{(0,1)}(t',t)=0,~~\tilde{x}(t',t')=\frac{1}{t'}.
$$
This gives $\tilde{x}(t',t)=1/t$ and thus
$$
p(t',t)^{\ast-1}=\left(\frac{1}{t}\Theta\right)\ast \delta''
=\frac{2}{t^3}\Theta-\frac{1}{t'^2}\delta+\frac{1}{t'}\delta'.
$$
We can now verify that this works as expected
\begin{align*}
p(t',t)^{\ast-1}\ast p(t',t) &=\left(\frac{1}{t}\Theta\right)\ast\delta''\ast (t'-2t)\Theta=\left(\frac{1}{t}\Theta\right)\ast\big((t'-2t)\Theta\big)^{(2,0)},\\
&=\left(\frac{1}{t}\Theta\right)\ast\left(\delta-t\delta'\right)=\frac{1}{t}\Theta-(-1)t\left(\frac{-1}{t^2}\Theta+\frac{1}{t'}\delta\right),\\
&=\frac{t}{t'}\delta = \delta,
\end{align*}
where the last equality follows by virtue of Eq.~(\ref{RelDeltas}). Now
\begin{align*}
p(t',t)\ast p(t',t)^{\ast-1} &=(t'-2t)\Theta\ast\left(\frac{1}{t}\Theta\right)\ast\delta'',\\
&=-(t'-t)\Theta\ast \delta''=-(-1)^2\big((t'-t)\Theta\big)^{(0,2)},\\
&=-\big(0-\delta-0\delta'\big)=\delta.
\end{align*}
\end{example}

A technique similar to the one used in the proof of Proposition \ref{prop:inv:poly} can be applied to a more general class of functions. For instance, whenever differentiating leads to an expression like
\begin{equation*}
    \delta^{(k)} \ast f(t',t) = \tilde{h}(t)f(t',t) + g(t',t),
\end{equation*}
the expression can be rewritten as
\begin{equation*}
    \big(\delta^{(k)}- \tilde{h}(t)\delta\big) \ast f(t',t) = g(t',t).
\end{equation*}
Then we can go on with a further combination of differentiations until there is no Heaviside function left on the right-hand side of the above equality. 
In particular, such a technique can be used when dealing with commonly encountered exponential or trigonometric functions. 

\subsection{$\ast$-inverses of piecewise smooth separable functions}\label{InvSeparable}
 The strategy used in the proof of Proposition~\ref{OneTimeInverses} can be extended to give $\ast$-inverses in the much more general case of functions which are separable and piecewise smooth in both time variables over the interval $I$.\\[-.7em] 

\begin{theorem}\label{InvHolo}
   Consider a function $f(t',t) := \tilde{f}(t',t) \Theta(t'-t)$ with $\tilde{f}(t',t)$ 
   a smooth function in $I \times I$, and so that $\tilde{f}(t,t)$ is not identically null. Assume that there exists a distribution $L(t',t):=\sum_{j=0}^{k+1} \tilde{g}_j(t')\delta^{(j)}$ with $\tilde{g}_j(t')$ smooth functions depending only on $t'$ and $\tilde{g}_{k+1}\neq0$, such that
   \begin{equation}\label{eq:Lf=0}
       L(t',t) \ast \tilde{f}(t',t) =0.
   \end{equation}
   Then, if $k>0$, the $\ast$-inverse of $f$ is
   \begin{align*}
f^{\ast-1} =\tilde{r}_{-1}(t',t)\Theta+\sum_{m=0}^k \tilde{r}_m(t') \delta^{(m)},
\end{align*}
with the smooth functions
\begin{align*}
\tilde{r}_{-1}(t',t)&:=\sum_{j=0}^{k+1}(-1)^j\tilde{y}_j^{(0,j)}(t',t),\\
\tilde{r}_{m\geq0}(t')&:=\sum_{j=m+1}^{k+1}(-1)^j \tilde{y}_j^{(0,j-1-m)}(t',t'),
\end{align*}
where $\tilde{y}_j(t',t):=\tilde{x}(t',t)\tilde{g}_j(t)$ and $\tilde{x}(t',t)$ is the solution of the linear homogeneous ordinary differential equation in $t$ 
\begin{equation*}
\sum_{m=0}^k\tilde{h}_m(t)\tilde{x}^{(0,m)}(t',t)=0,
\end{equation*}
with boundary conditions
\begin{equation*}
\tilde{x}^{(0,k-1)}(t',t') = \left(\tilde{h}_k(t')\right)^{-1},\, 
    \tilde{x}^{(0,k-2)}(t',t') = 0, \dots, \tilde{x}(t',t') = 0. 
\end{equation*}
In these expressions, $\tilde{h}_m(t)$ are smooth functions given by
$$
   \tilde{h}_m(t):=\sum_{j=m+1}^{k+1}\sum_{\ell=m}^{j-1}\binom{\ell}{m}(-1)^\ell \tilde{f}^{(j-\ell-1,0)}(t,t)\tilde{g}_j^{(\ell-m)}(t).
   $$
 If instead $k=0$, the $\ast$-inverse of $f$ is trivially given by
 $$
 f^{\ast-1}(t',t)=\frac{1}{\tilde{g}_1(t')\tilde{f}(t',t')} L(t',t).
 $$
\end{theorem}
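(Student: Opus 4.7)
The plan is to mirror the strategy of Proposition~\ref{prop:inv:poly}: use $L$ as a left multiplier that annihilates the Heaviside part of $f$, then invert the resulting distribution which is supported on the diagonal.

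\textbf{Step 1 (compute $L\ast f$).} I would expand each $\tilde{g}_j(t')\delta^{(j)}\ast f$ via identity~(\ref{eq:deltader}a). The Heaviside contributions add up to $\bigl(\sum_{j=0}^{k+1}\tilde{g}_j(t')\tilde{f}^{(j,0)}(t',t)\bigr)\Theta$, which is exactly $(L\ast\tilde{f})\,\Theta$ and vanishes by hypothesis~(\ref{eq:Lf=0}). What remains is a combination of terms of the form $\tilde{g}_j(t')\tilde{f}^{(j-1-r,0)}(t,t)\delta^{(r)}(t'-t)$, all supported on the diagonal. Using the standard distributional identity $\phi(t')\delta^{(r)}(t'-t)=\sum_{s=0}^{r}\binom{r}{s}(-1)^{s}\phi^{(s)}(t)\delta^{(r-s)}(t'-t)$, I would transfer every $\tilde{g}_j(t')$ onto the deltas and regroup by the order $m$ of the surviving $\delta^{(m)}$, writing $L\ast f=\sum_{m=0}^{k}c_m(t)\delta^{(m)}(t'-t)$ with $c_m$ given by an explicit double sum.

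\textbf{Step 2 (invert $M:=L\ast f$).} Any left $\ast$-inverse $q$ of $M$ yields a left $\ast$-inverse $q\ast L$ of $f$. Following the ansatz of Proposition~\ref{prop:inv:poly}, I look for $q(t',t)=\tilde{x}(t',t)\Theta(t'-t)$ with $\tilde{x}$ smooth in $t$. Applying~(\ref{eq:deltader}b) and Leibniz to each $\tilde{x}\Theta\ast c_m\delta^{(m)}$ produces a Heaviside piece $(-1)^{m}(\tilde{x}\,c_m)^{(0,m)}\Theta$ plus deltas of orders $0,\ldots,m-1$ with coefficients built from $c_m$, its derivatives and $\tilde{x}^{(0,\cdot)}$ evaluated on the diagonal. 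Imposing $q\ast M=\delta(t'-t)$ then splits into two independent conditions: the vanishing of the $\Theta$-coefficient, which after regrouping by the order of $\tilde{x}^{(0,\cdot)}$ yields precisely the order-$k$ linear ODE $\sum_{m=0}^{k}\tilde{h}_m(t)\tilde{x}^{(0,m)}(t',t)=0$ with the $\tilde{h}_m$ of the statement; and the matching of the $\delta^{(0)},\ldots,\delta^{(k-1)}$ terms on the right-hand side, which collapses to the stated $k$ boundary conditions at $t=t'$. A short direct calculation shows that the leading coefficient is $\tilde{h}_k(t)=(-1)^{k}\tilde{f}(t,t)\tilde{g}_{k+1}(t)$; by the hypotheses on $\tilde{f}(t,t)$ and $\tilde{g}_{k+1}$ it has only isolated zeros, so the Cauchy problem admits a solution almost everywhere in $I\times I$.

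\textbf{Step 3 (assemble $f^{\ast-1}$).} The candidate inverse is $f^{\ast-1}=q\ast L=\sum_{j=0}^{k+1}\tilde{x}(t',t)\Theta\ast\tilde{g}_j(t')\delta^{(j)}$. Applying~(\ref{eq:deltader}b) termwise with $\tilde{y}_j:=\tilde{x}\,\tilde{g}_j$ and sorting Heaviside from delta contributions yields exactly the stated expressions for $\tilde{r}_{-1}(t',t)$ and $\tilde{r}_m(t')$. The degenerate case $k=0$ is treated separately: then $L=\tilde{g}_0(t')\delta+\tilde{g}_1(t')\delta'$ and Step~1 gives $M=\tilde{g}_1(t')\tilde{f}(t',t')\delta(t'-t)$, which is immediately invertible by dividing by the smooth prefactor.

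\textbf{Main obstacle.} The principal difficulty is the combinatorial bookkeeping of Steps~1 and~3: keeping track of binomial coefficients, successive signs $(-1)^{s}$, and the two forms of identity~(\ref{RelDeltas}) carefully enough to match the closed forms of $\tilde{h}_m$, $\tilde{r}_{-1}$ and $\tilde{r}_m$ in the statement. A secondary concern is verifying that the constructed left $\ast$-inverse is also a right $\ast$-inverse; I would handle this either by a mirrored construction with a right annihilator of $\tilde{f}$ (which exists under the same assumptions by symmetry between $t'$ and $t$) or by an algebraic uniqueness argument once both one-sided inverses are known to exist.
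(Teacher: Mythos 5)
Your proposal follows essentially the same route as the paper's proof: $\ast$-multiply by $L$ so the hypothesis $L\ast\tilde f=0$ kills the $\Theta$ part, rewrite the remaining diagonal-supported deltas via Eq.~(\ref{RelDeltas}) as $\sum_m(-1)^m\tilde h_m(t)\delta^{(m)}$, invert this with the ansatz $\tilde x(t',t)\Theta$ (yielding exactly the stated ODE and boundary conditions, with leading coefficient $(-1)^k\tilde f(t,t)\tilde g_{k+1}(t)$ nonzero almost everywhere), and finally assemble $f^{\ast-1}=x\ast L$, with the $k=0$ case handled separately just as in the paper. The combinatorial bookkeeping you defer is precisely what the paper carries out, and your remark about the right inverse is not a gap relative to the paper, whose proof likewise only establishes the left-inverse identity and relies on the symmetric construction noted after the theorem.
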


Inverting the role of $t'$ and $t$, a completely similar theorem is proven by changing all left $\ast$-multiplications by $\delta^{(j)}$ with right multiplications and vice-versa. In this situation, $\tilde{x}$ satisfies a linear homogeneous ordinary differential equation in $t'$, and the boundary conditions involve the variable $t$.

\begin{proof}
By $\ast$-multiplying $L$ by $f$, we get
   \begin{equation}\label{eq:Lf}
       L(t',t) \ast f(t',t) = \sum_{j=0}^{k+1} \tilde{g}_j(t')\left(\delta^{(j)} \ast f(t',t)\right),
   \end{equation}
   where 
   \begin{equation*}
       \delta^{(j)} * f(t',t) = \tilde{f}^{(j,0)}(t',t) \Theta + \tilde{f}^{(j-1,0)}(t,t) \delta +
       \dots +
       \tilde{f}(t,t) \delta^{(j-1)}.
   \end{equation*}
      Therefore \eqref{eq:Lf}  evaluates to
   \begin{equation*}
       L \ast f = \big(L \ast \tilde{f}\,\big)\Theta+\sum_{j=0}^{k}\left(\sum_{\ell=j+1}^{k+1}\tilde{g}_\ell(t')\tilde{f}^{(\ell-j-1,0)}(t,t)\right)\delta^{(j)}.
   \end{equation*}
   Noting that $L \ast \tilde{f}=0$ and by applying the transformation of Eq.~(\ref{RelDeltas}) to $\tilde{g}_\ell(t')\delta^{(j)}$, the last equation can be further expressed in the form $L\ast f=\sum_{m=0}^{k}(-1)^m \tilde{h}_m(t)\delta^{(m)}$ with the smooth functions
   $$
   \tilde{h}_m(t):=\sum_{j=m+1}^{k+1}\sum_{\ell=m}^{j-1}\binom{\ell}{m}(-1)^\ell \tilde{f}^{(j-\ell-1,0)}(t,t)\tilde{g}_j^{(\ell-m)}(t).
   $$
   Assume that $x(t',t) = \tilde{x}(t',t) \Theta(t'-t)$, with $\tilde{x}$ smooth function of $t$,  is the $\ast$-inverse of $L\ast f$. Then it should satisfy
   \begin{equation}\label{eq:xinf:eq}
        x(t',t) \ast \left( \sum_{m=0}^{k}(-1)^m  \tilde{h}_m(t)\delta^{(m)} \right) = \delta .
   \end{equation}
   We now proceed similarly to the proof of Proposition~\ref{prop:inv:poly}. Since again
   \begin{align*}
       x(t',t) \ast \delta^{(m)}  
       &=(-1)^m\tilde{x}^{(0,m)}(t',t)\Theta+(-1)^m\sum_{j=0}^{m-1}\tilde{x}^{(0,m-1-j)}(t',t')\delta^{(j)},
   \end{align*}
   it follows that 
   \begin{align*}
\big(  x \ast L\ast f\big)(t',t) &= \sum_{m=0}^k\tilde{h}_m(t)\left(\tilde{x}^{(0,m)}(t',t)\Theta+\sum_{j=0}^{m-1}\tilde{x}^{(0,m-1-j)}(t',t')\delta^{(j)}\right),\\
&=\sum_{m=0}^k\tilde{h}_m(t)\tilde{x}^{(0,m)}(t',t)\Theta+\sum_{m={\color{blue}1}}^k\sum_{j=0}^{m-1}\tilde{h}_m(t)\tilde{x}^{(0,m-1-j)}(t',t')\delta^{(j)},\\
&=\sum_{m=0}^k\tilde{h}_m(t)\tilde{x}^{(0,m)}(t',t)\Theta+\sum_{j=0}^{k-1}\sum_{m=j+1}^k\tilde{h}_m(t)\tilde{x}^{(0,m-1-j)}(t',t')\delta^{(j)}.
   \end{align*}
Now by Eq.~(\ref{RelDeltas}), $\tilde{h}_m(t)\delta^{(j)}=\big(\tilde{h}_m(t')\delta\big)^{(j,0)}=\sum_{n=0}^j \binom{j}{n}\tilde{h}_m^{(j-n)}(t')\delta^{(n)}$, we have   
\begin{align*}  
&\sum_{j=0}^{k-1}\sum_{m=j+1}^k\tilde{h}_m(t)\tilde{x}^{(0,m-1-j)}(t',t')\delta^{(j)}\\
&\hspace{15mm}=\sum_{j=0}^{k-1}\sum_{m=j+1}^k\tilde{x}^{(0,m-1-j)}(t',t')\sum_{n=0}^j \binom{j}{n}\tilde{h}_m^{(j-n)}(t')\delta^{(n)},\\
&\hspace{15mm}=\sum_{n=0}^{k-1}\left(\sum_{j=n}^{k-1}\sum_{m=j+1}^k\tilde{x}^{(0,m-1-j)}(t',t')\binom{j}{n}\tilde{h}_m^{(j-n)}(t')\right)\delta^{(n)}.
\end{align*}   
Hence Eq.~\eqref{eq:xinf:eq} becomes equivalent to the ordinary homogenous linear differential equation in $t$
\begin{equation}\label{ODExHolo}
\sum_{m=0}^k\tilde{h}_m(t)\tilde{x}^{(0,m)}(t',t)=0,
\end{equation}
with boundary conditions
\begin{align*}
&\sum_{j=0}^{k-1}\sum_{m=j+1}^k\tilde{x}^{(0,m-1-j)}(t',t')\tilde{h}_m^{(j)}(t')=1,\\
&\sum_{j=n}^{k-1}\sum_{m=j+1}^k\tilde{x}^{(0,m-1-j)}(t',t')\binom{j}{n}\tilde{h}_m^{(j-n)}(t')=0,\quad n=1,\dots,k-1.
\end{align*}
For every $t'$ such that $\tilde{h}_k(t')\neq 0$, the last $k-1$ equations imply
$\tilde{x}^{(0,j)}(t',t') = 0$, for $j=0,\dots,k-2$, and $\tilde{x}^{(0,k-1)}(t',t')\tilde{h}_k(t') = 1$.
Thus $\tilde{x}$ is well defined for almost every $t'\in I$ as the unique solution of Eq.~(\ref{ODExHolo}) with the boundary conditions above and the choice of $\tilde{x}$ as a smooth function of $t$ is consistent ($\tilde{x}(t',t)$ is defined for every $t',t \in I \setminus \{\tau : \tilde{f}(\tau,\tau)\tilde{g}_{k+1}(\tau) = 0\}$). 

We can now evaluate $f^{\ast-1}=x\ast L$ explicitly, 
\begin{align*}
f^{\ast-1} &= \sum_{j=0}^{k+1}(-1)^j\big(\tilde{x}(t',t)\tilde{g}_j(t)\big)^{(0,j)}\Theta+\sum_{j=1}^{k+1}(-1)^j\sum_{m=0}^{j-1}\big(\tilde{x}(t',t')\tilde{g}_j(t')\big)^{(0,j-1-m)}\delta^{(m)},\\
&=\tilde{r}_{-1}(t',t)\Theta+\sum_{m=0}^k \tilde{r}_m(t') \delta^{(m)},
\end{align*}
with the smooth functions
\begin{align*}
\tilde{r}_{-1}(t',t)&:=\sum_{j=0}^{k+1}(-1)^j\big(\tilde{x}(t',t)g_j(t)\big)^{(0,j)},\\
\tilde{r}_{m\geq0}(t')&:=\sum_{j=m+1}^{k+1}(-1)^j\big(\tilde{x}(t',t')\tilde{g}_j(t')\big)^{(0,j-1-m)}.
\end{align*}
\end{proof}

\begin{remark}
As explained in Remark \ref{rem:ptt}, the assumption $f(t,t)=0$ is not necessary. We can reformulate the theorem statement so that the condition is $f$ not identically zero on $I$.\\[-.5em]
\end{remark}

The most stringent condition imposed by Theorem~\ref{InvHolo} is the existence of the differential operator $L$ with coefficients that depend only on $t'$. This condition can be made more transparent upon relating it to the class of separable functions.

Let $\tilde{y}_1(t'),\dots, \tilde{y}_{k+1}(t')$ be smooth functions of $t'$, and $\tilde{a}_1(t),\dots,\tilde{a}_{k+1}(t)$ be functions of $t$.
If $\tilde{y}_1(t'),\dots, \tilde{y}_{k+1}(t')$ are linearly independent, equivalently, the related \emph{Wronskian} $W(\tilde{y}_1,\dots,\tilde{y}_{k+1})$ is not identically null, i.e.,
\begin{equation*}
    W(\tilde{y}_1,\dots,\tilde{y}_{k+1}) := 
    \begin{vmatrix} 
    \tilde{y}_1       & \tilde{y}_2       & \dots & \tilde{y}_{k+1} \\
    \tilde{y}_1'      & \tilde{y}_2'      & \dots & \tilde{y}_{k+1}' \\
    \vdots    & \vdots    &       & \vdots \\
    \tilde{y}_1^{(k)} & \tilde{y}_2^{(k)} & \dots & \tilde{y}_{k+1}^{(k)}
    \end{vmatrix} \neq 0,
\end{equation*}
then there exist $L$ as in Theorem \ref{InvHolo} so that $L*\tilde{f} = 0$ for every separable function 
\begin{equation}\label{eq:sepfun}
    \tilde{f}(t',t) = \tilde{a}_1(t)\tilde{y}_1(t') + \dots + \tilde{a}_{k+1}(t)\tilde{y}_{k+1}(t').
\end{equation}
Indeed, the conditions $L*\tilde{y}_j = 0$, for $j=1,\dots,k+1$, give the system
\begin{equation*}
    \begin{bmatrix} 
    \tilde{y}_1      & \tilde{y}_1'       & \dots & \tilde{y}_1^{(k)} \\
    \tilde{y}_2      & \tilde{y}_2'       & \dots & \tilde{y}_2^{(k)} \\
    \vdots   & \vdots     &       & \vdots \\
    \tilde{y}_{k+1}  & \tilde{y}_{k+1}'   & \dots & \tilde{y}_{k+1}^{(k)}
    \end{bmatrix}
    \begin{bmatrix}
    \tilde{g}_0 \\
    \tilde{g}_1 \\
    \vdots \\
    \tilde{g}_k
    \end{bmatrix} =
    -\tilde{g}_{k+1}\begin{bmatrix}
    \tilde{y}_{1}^{(k+1)} \\
    \tilde{y}_{2}^{(k+1)} \\
    \vdots \\
    \tilde{y}_{k+1}^{(k+1)}
    \end{bmatrix},
\end{equation*}
whose solutions exist since the Wronskian is not identically null. In particular, at least one of the solutions has smooth coefficients.
Theorem \ref{InvHolo} thus yields the following corollary for separable functions:

\begin{corollary}
   Let $f(t',t) := \tilde{f}(t',t) \Theta(t'-t)$ with $\tilde{f}(t',t)$ 
   a smooth separable function in $I \times I$ so that $\tilde{f}(t,t)$ is not identically null. Then $f^{\ast-1}$ exists and is given as in Theorem \ref{InvHolo}.
\end{corollary}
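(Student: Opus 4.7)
The plan is to deduce the corollary directly from Theorem~\ref{InvHolo} by exhibiting, for every smooth separable $\tilde{f}$, a differential operator $L(t',t)=\sum_{j=0}^{k+1}\tilde{g}_j(t')\delta^{(j)}$ with $\tilde{g}_{k+1}\neq 0$ and $L\ast\tilde{f}=0$. Once such an $L$ is produced, the existence and explicit form of $f^{\ast-1}$ follow immediately from Theorem~\ref{InvHolo}, so no further $\ast$-calculus is needed.

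First I would write $\tilde{f}(t',t)=\sum_{i=1}^{m}\tilde{a}_i(t)\tilde{y}_i(t')$ and, by collecting coefficients, reduce to a representation in which $\tilde{y}_1,\dots,\tilde{y}_{k+1}$ are linearly independent smooth functions of $t'$. This is possible because, if the $\tilde{y}_i$ are dependent, we may express one of them as a linear combination of the others and absorb the corresponding $\tilde{a}_i(t)$ coefficients into the remaining terms, iterating until independence is achieved. Independence guarantees that the Wronskian $W(\tilde{y}_1,\dots,\tilde{y}_{k+1})$ is not identically null.

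Next, recalling that $\ast$-multiplication by $\delta^{(j)}$ acts as $j$-fold differentiation in $t'$, the condition $L\ast \tilde{y}_i = 0$ for $i=1,\dots,k+1$ translates into the $(k+1)\times(k+1)$ linear system with matrix $[\tilde{y}_i^{(j)}(t')]_{i,j}$ displayed immediately before the corollary. Its determinant is exactly the Wronskian, which is nonzero outside an isolated set of points in $I$, so by Cramer's rule the coefficients $\tilde{g}_0(t'),\dots,\tilde{g}_k(t')$ are smooth functions of $t'$ on the complement of that exceptional set, once $\tilde{g}_{k+1}(t')$ is fixed to a nonvanishing smooth choice (e.g.\ $\tilde{g}_{k+1}\equiv 1$). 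By linearity of $\ast$ in its second argument, $L\ast\tilde{f}=\sum_i\tilde{a}_i(t)(L\ast\tilde{y}_i)=0$, so the hypothesis of Theorem~\ref{InvHolo} is satisfied.

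The main obstacle I anticipate is purely bookkeeping: ensuring that the resulting $\tilde{g}_j$ are smooth wherever needed, given that the Wronskian may vanish on an isolated set, and that the existence of $f^{\ast-1}$ is understood in the almost-everywhere sense announced in \S\ref{InversesSection}. Since Theorem~\ref{InvHolo} already tolerates this (its $\tilde{x}$ is defined off $\{\tau:\tilde{f}(\tau,\tau)\tilde{g}_{k+1}(\tau)=0\}$), and since the assumption that $\tilde{f}(t,t)$ is not identically null matches the theorem's hypothesis, the corollary follows by invoking Theorem~\ref{InvHolo} with the $L$ just constructed, with Remark~\ref{rem:ptt} handling the case $\tilde{f}(t,t)\equiv 0$ as indicated in the remark following Theorem~\ref{InvHolo}.
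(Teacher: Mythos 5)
Your proposal is correct and follows essentially the same route as the paper: reduce to a representation $\tilde{f}=\sum_i \tilde{a}_i(t)\tilde{y}_i(t')$ with linearly independent $t'$-factors, construct $L$ by solving the Wronskian linear system $L\ast\tilde{y}_i=0$, and then invoke Theorem~\ref{InvHolo} (with the same almost-everywhere caveats the paper itself uses). The only differences are cosmetic: you normalize $\tilde{g}_{k+1}\equiv 1$ and apply Cramer's rule, while the paper leaves $\tilde{g}_{k+1}$ free and simply asserts that at least one solution has smooth coefficients.
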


\begin{example}
Let us determine the $\ast$-inverse of $f(t',t)=(t'^2-t/t')\Theta(t'-t)$. Since $\tilde{f}(t',t)$ is separable, smooth in both variables, and $\tilde{f}(t,t)$ is not identically null, Theorem~\ref{InvHolo} applies immediately. Setting $L(t',t):=\tilde{g}_0(t') \delta+\tilde{g}_1(t')\delta'+\tilde{g}_2(t')\delta''$ with $\tilde{g}_0(t'):=1$, $\tilde{g}_1(t'):=0$ and $\tilde{g}_2(t'):=-t'^2/2$,
we have $k=1$ and
$
L(t',t)\ast \tilde{f}(t',t)=0.
$
This leads to 
$$
\tilde{h}_0(t):=3t/2,\quad \tilde{h}_1(t):=(t^4+t^2)/2,
$$
which are the only non-identically null functions $\tilde{h}_m$.
The function $\tilde{x}$ is thus the solution of 
$$
3\tilde{x}(t',t)+t(1+t^2)\tilde{x}^{(0,1)}(t',t)=0,\quad
\tilde{x}(t',t')\frac{t'^2}{2}(t'^2+1)=1.
$$
We find
$$
\tilde{x}(t',t)=\frac{2t'(t^2+1)^{3/2}}{(t'^2+1)^{5/2}t^3}.
$$
We verify that  
$$
x\ast L \ast f=\tilde{h}_1(t)\tilde{x}(t',t')\delta=\frac{t^2 \left(t^2+1\right)}{t'^2 \left(t'^2+1\right)}\delta=\delta,
$$
indicating that indeed 
$$
f^{\ast -1}=x\ast L=\left(\frac{2t'(t^2+1)^{3/2}}{(t'^2+1)^{5/2}t^3}\Theta\right) \ast\left(\delta-\frac{t'^2}{2}\delta''\right).
$$
\end{example}

\begin{example}
Let us determine the $\ast$-inverse of $f(t',t)=\cos(t') t\,\Theta(t'-t)$. Since $\tilde{f}(t',t)$ is separable, smooth in both variables, and $\tilde{f}(t,t)$ is not identically null, Theorem~\ref{InvHolo} applies. Furthermore, let $L(t',t):=\delta+\delta''=1\times\delta+0\times \delta'+1\times \delta''$ and observe that
$
L(t',t)\ast \tilde{f}(t',t)=0.
$
It follows that here $k=1$ and $\tilde{x}(t',t)$ is the solution of 
$$
-t\sin(t)\tilde{x}(t',t)-t\cos(t)\tilde{x}^{(0,1)}(t',t)=0,~\tilde{x}(t',t')=\frac{-1}{t'\cos(t')}.
$$
This gives 
$$
\tilde{x}(t',t)=-\frac{\cos(t)}{\cos(t')^2t'},
$$
and, from there,
$$
f^{\ast-1}(t',t)=\frac{\sin(t')}{\cos(t')^2t'}\,\delta-\frac{1}{\cos(t')t'}\,\delta' .
$$
We verify this result
\begin{align*}
f^{\ast-1}(t',t)\ast f(t',t) &= \frac{\sin(t')}{\cos(t')^2t'}\cos(t')t\,\Theta-(-1)\frac{1}{\cos(t')t'}\big(-\sin(t')t\,\Theta+\cos(t) t\, \delta\big),\\
&=\frac{\cos(t)t}{\cos(t')t'}\delta=\delta,
\end{align*}
by virtue of Eq.~(\ref{RelDeltas}). The proof for $ f(t',t)\ast f^{\ast-1}(t',t)=\delta$ is similar.\\[-.3em] 
\end{example}

\begin{example}\label{ex:exp}Let us determine the $\ast$-inverse of $e^{3t'+t}$. We can apply Theorem~\ref{InvHolo}, this time with $L(t',t)=\delta-(1/3)\delta'$, i.e., $k=0$. Thus
$$
\tilde{f}^{\ast-1}(t',t)=-3e^{-4t'}\left(\delta-\frac{1}{3}\delta'\right).
$$
We verify this result immediately
$$
f\ast f^{\ast-1}=-3e^{3t'-3t}\Theta-(-1)\big(-3e^{3t'-3t}\Theta+e^{-t'+t'}\delta\big)=\delta,
$$
and similarly for $f^{\ast-1}\ast f$. 
\end{example}

\begin{remark}
Our results concern $\ast$-inverses of separable functions of the form $f:=\tilde{f} \Theta$, with $\tilde{f}$ separable and smooth in $t'$ and $t$. In particular, they do not extend easily to $\ast$-resolvents, which are $\ast$-inverses of generalized functions of the form $\delta-f$, typically with $f$ as above. Rather, $\ast$-resolvents are best determined as solutions of a linear Volterra integral equation of the second kind with kernel $f$ and inhomogeneity $1_\ast=\delta$,
$$
R_\ast(f):=\big(1_\ast-f\big)^{\ast-1}\Rightarrow R_\ast(f)=\delta +f \ast R_\ast(f).
$$
There is a vast literature on the existence and smoothness of the solutions of such equations \cite{Gripenberg1990, sezer1994, razdolsky2017}, as well as numerous techniques to determine them both analytically and numerically \cite{Tricomi1985, Linz1985, Polyanin2008, GiscardVolterra}. In the context of the $\ast$-Lanczos algorithm, $\ast$-resolvents play a central role in the final step when computing $\mathsf{R}_\ast(\mathsf{T})_{11}$ via Eq.~(\ref{PSresult}), but they can also be profitably exploited when calculating the $\beta^{\ast-1}_j$.
Indeed, for $j \geq 2$, $\beta_j$ can be expressed in the alternative way
\begin{equation}\label{eq:beta:expr}
\beta_j = (\b{w}_j^H + \b{w}_{j-2}^H)* \mathsf{A} * \b{v}_{j-1} - 1_*.
\end{equation}
The advantage of this representation is that it shows $\beta_j^{*-1}$ to be the $\ast$-resolvent of 
$\gamma_j = (\b{w}_j^H + \b{w}_{j-2}^H)* \mathsf{A} * \b{v}_{j-1}$, which gives direct access to all research on solutions of Volterra equations, including for situations where $\beta_j$ does not satisfy the assumptions of Theorem \ref{InvHolo}.
\end{remark}

\section{Relation to the Green's function inverse problem}\label{GreenProb}
Let $G$ be a distribution. The Green's function inverse problem consists in determining an operator $D_G$ whose fundamental solution is $G$, i.e., $D_G\, (G)=\delta$. 
This problem, also known as kernel inverse problem, appears sporadically in the literature when a kernel function $G$ is motivated by external constraints, and the corresponding differential operator is determined from it secondarily; see e.g., in interpolation problems \cite{Bouhamidi2005, Fasshauer2013}.

In the most commonly encountered framework, however, the product utilized is a convolution. Then $D_G$ is found from its Fourier (or Laplace) transform, which is the inverse of the Fourier transform of $G$. The problem considered here is thus more general, the $\ast$-product reducing to a convolution only when the functions involved depend only on the difference between the two-time variables. Here we rather only suppose that $G$ is a non-identically null distribution of the form $G:=\tilde{G}(t',t)\Theta(t'-t)$ such that $\tilde{G}$ is separable and smooth in both time variables. Then the proof of Theorem~\ref{InvHolo} constructively shows that there exists a distribution $G^{\ast-1}$  such that 
$$
G^{\ast-1}\ast G = \delta.
$$
In other terms, the $\ast$-action of $G^{\ast-1}$ on $G$ is identical with the ordinary action of the differential operator $D_G$ whose Green's function is $G$.
In order to give $D_G$ explicitly, observe that
$$
G^{\ast-1}=\tilde{r}_{-1}(t',t)\Theta+\sum_{m=0}^k \tilde{r}_m(t') \delta^{(m)},
$$
with the smooth functions $\tilde{r}_{-1\leq j\leq k}$ defined in Theorem~\ref{InvHolo}. Since for any distribution $f$, $\delta^{(j)}\ast f = f^{(j,0)}$, we get the action of $D_G$ on any distribution $f$ as
$$
D_G\,f=\int_{-\infty}^{\infty}\tilde{r}_{-1}(t',\tau)\Theta(t'-\tau) f(\tau,t)\,\text{d}\tau+\sum_{m=0}^k \tilde{r}_m(t') \frac{\partial^m}{\partial t'^m}f(t',t).
$$
Here recall that should $f$ depend on a single variable or less, then it should be treated as the left time-variable as indicated in Subsection \ref{ProdDef}, that is here $t'$.\\

\section{Conclusion}
The $\ast$-Lanczos algorithm for evaluating time-ordered exponentials relies on the existence of the $\ast$-inverses of the coefficients $\beta_n(t',t)$ produced by the algorithm. Should an inverse fail to exist, the Lanczos procedure suffers a breakdown, and the ordered exponential cannot be evaluated.  Now, under the conjecture of \cite{GiscardPozza2019} and its assumptions, $\beta_n(t',t)=\tilde{\beta}_n(t',t)\Theta(t'-t)$, where $\tilde{\beta}_n(t',t)$ is a separable function, smooth in both $t'$ and $t$. Assuming this to be true, we showed that if $\beta_n(t',t)$ is not identically null, then its $\ast$-inverse exists and the algorithm does not breakdown. Furthermore, we described explicit procedures to obtain the required $\ast$-inverses and illustrated our results with several examples. These procedures relate  $\ast$-inverses to the solutions of linear differential equations with smooth coefficients. 
As a corollary of this work, we solved a generalization of the Green's function inverse problem for piecewise smooth distributions.


\end{document}